\documentclass[12pt]{amsart}

\usepackage{hyperref}
\usepackage{color}
\usepackage{pinlabel}
\usepackage{enumerate,enumitem}
\usepackage{xypic}
\usepackage{graphicx}
\usepackage{booktabs}

\usepackage{amsmath,amsfonts,amssymb}
\textwidth 5.7in
\evensidemargin +0.2in
\oddsidemargin +0.2in
\textheight 9in \topmargin -0.45in
\usepackage{psfrag}

\theoremstyle{plain}
\newtheorem*{theorem*}{Theorem}
\newtheorem*{lemma*}{Lemma}
\newtheorem*{corollary*}{Corollary}
\newtheorem*{corollary-1}{Corollary 1}
\newtheorem*{corollary-2}{Corollary 2}

\newtheorem*{proposition*}{Proposition}
\newtheorem*{proposition-1}{Proposition 1}
\newtheorem*{proposition-2}{Proposition 2}
\newtheorem*{proposition-3}{Proposition 3}

\newtheorem{conjecture*}{Conjecture}
\newtheorem{theorem}{Theorem}[section]
\newtheorem{lemma}[theorem]{Lemma}

\newtheorem{proposition}[theorem]{Proposition}

\newtheorem{conjecture}[theorem]{Conjecture}
\newtheorem{question}[theorem]{Question}

\theoremstyle{remark}

\newtheorem*{definition}{Definition}

\newtheorem*{claim}{Claim}

\theoremstyle{definition}

\def\op{\operatorname}

\def\gl{\operatorname{GL}}    \def\Z{\mathbb{Z}}   
\def\N{\mathbb{N}}  \def\l{\lambda}  
   \def\tor{\operatorname{Tor}} \def\bp{\begin{pmatrix}}
 \def\ep{\end{pmatrix}} 
\def\bn{\begin{enumerate}} 

   \def\en{\end{enumerate}}
\def\ba{\begin{array}} \def\ea{\end{array}}      \def\ti{\tilde} \def\wti{\widetilde}
\def\id{\operatorname{id}}   
  
\def\ker{\operatorname{ker}}\def\be{\begin{equation}} \def\ee{\end{equation}}

\def\ol{\overline}

\def\ti{\tilde}

\def\mod{\op{Mod}}

\def\tor{\op{Tor}}

\newcommand{\smfrac}[2]{\mbox{\footnotesize$\displaystyle\frac{#1}{#2}$}}

\newcommand{\ttmprod}[2]{\mbox{\footnotesize{$\textstyle \prod\limits_{#1}^{#2}$}}}

\renewcommand\epsilon{\varepsilon}
\DeclareMathAlphabet{\mathbf}{OML}{cmm}{b}{it}

\numberwithin{equation}{section}

\makeindex

\begin{document}

\title{Torsion in the homology of finite covers of  3-manifolds}

\author{Stefan Friedl}
\address{Fakult\"at f\"ur Mathematik\\ Universit\"at Regensburg\\   Germany}
\email{sfriedl@gmail.com}

\author{Gerrit Herrmann}
\address{Fakult\"at f\"ur Mathematik\\ Universit\"at Regensburg\\   Germany}
\email{gerrit.herrmann@mathematik.uni-regensburg.de}

%\date{today}
\begin{abstract}
Let  $N$ be a prime 3-manifold that is not a closed graph manifold. Building on a result of Hongbin Sun  and using a result of Asaf Hadari we show that  for every $k\in\N$ there exists a finite cover $\wti{N}$ of $N$ such that $|\tor H_1(\wti{N};\Z)|>k$. 
\end{abstract}

\maketitle

%=========================================
\section{Introduction}
Let $N$ be a 3-manifold. Here and throughout the paper all 3-manifolds are understood to be compact, orientable, connected and with empty or toroidal boundary.
If $N$ is prime, then we denote by $\op{vol}(N)$ the sum of the volumes of the hyperbolic pieces in the geometric decomposition of $N$.
Furthermore, given an abelian group $A$ we denote by $\tor A$ the torsion subgroup.
The following conjecture has been formulated in  similar forms by many mathematicians.  (See e.g.~\cite{BV13}, ~\cite[Conjecture~1.12]{Lu13}, ~\cite[Conjecture~1]{Le14} and~\cite[Question~7.5.2]{AFW15})

\begin{conjecture}\label{conj:vol-tor}
For every prime 3-manifold $N$  there exists a cofinal regular tower $\{\widetilde{N}_n\}$ of $N$
such that
\[  \lim_{n\to\infty}  \smfrac{1}{[\ti{N}_n:N]} \ln\big| \operatorname{Tor} H_1(\widetilde{N}_n;\Z)\big|\,\,=\,\,\smfrac{1}{6\pi} \operatorname{vol}(N)~?\]
\end{conjecture}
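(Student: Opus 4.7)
The plan is to recognize the right-hand side as (the absolute value of) the $L^2$-torsion $\rho^{(2)}(N)$ of the universal cover of $N$, and then to reduce the conjecture to an approximation theorem for torsion in integral homology. By work of L\"uck, Lott, and L\"uck--Schick, for a prime 3-manifold $N$ with empty or toroidal boundary one has $|\rho^{(2)}(N)|=\frac{1}{6\pi}\operatorname{vol}(N)$, because hyperbolic pieces contribute their volume and Seifert fibered pieces contribute zero. The conjecture is then the statement that integral torsion along a suitable cofinal regular tower realizes this $L^2$-invariant.

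My strategy would proceed in three steps. Step one is to reduce from $N$ to its geometric pieces $N_1,\dots,N_r$: given a tower $\{\wti N_n\}$, a Mayer--Vietoris argument along the lifts of the JSJ tori controls $\log|\tor H_1(\wti N_n;\Z)|$ in terms of the sum $\sum_i \log|\tor H_1(\wti N_{n,i};\Z)|$, with an error coming from the homology of the gluing tori that is at worst linear in $[\wti N_n:N]$ and therefore negligible after normalizing. Step two is to show that each Seifert fibered piece contributes zero to the limit, consistently with its vanishing $L^2$-torsion; it suffices to establish that $\log|\tor H_1|$ of a finite cover of a Seifert fibered 3-manifold grows only sublinearly in the covering index, which is plausible given the structure of these fundamental groups as virtually central $\Z$-extensions of $2$-orbifold groups. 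Step three is to prove the matching statement for each hyperbolic piece $N_i$: there is a cofinal regular tower $\{\wti N_{n,i}\}$ along which $\frac{1}{[\wti N_{n,i}:N_i]}\log|\tor H_1(\wti N_{n,i};\Z)|$ converges to $\frac{1}{6\pi}\operatorname{vol}(N_i)$.

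The main obstacle is step three. Even for a single closed hyperbolic 3-manifold the conjectured convergence $\frac{1}{[M_n:M]}\log|\tor H_1(M_n;\Z)|\to \frac{1}{6\pi}\operatorname{vol}(M)$ is open: Bergeron and Venkatesh prove an analogue for strongly acyclic local systems along congruence towers of arithmetic lattices, but with trivial coefficients on a 3-manifold the cancellations between Betti numbers and torsion, together with the possible growth of $b_1(\wti N_n)$ in the tower, make the limit delicate to pin down. Even granting such a convergence on each hyperbolic piece, step one still requires a single cofinal regular tower of $N$ whose restrictions to all pieces simultaneously satisfy the required hypotheses; because the gluing data along the JSJ tori couple the pieces, the piece-towers cannot be chosen independently. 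These two difficulties together are presumably why the present paper settles only the weaker qualitative assertion that $|\tor H_1(\wti N;\Z)|$ can be made arbitrarily large along finite covers, rather than establishing the full quantitative growth rate.
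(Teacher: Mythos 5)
This statement is Conjecture~\ref{conj:vol-tor}, not a theorem: the paper offers no proof of it, and indeed the whole point of the paper is to establish a much weaker qualitative consequence (Theorems~\ref{mainthm} and~\ref{mainthm2}: that torsion can be made arbitrarily large in some finite cover of a non-graph prime 3-manifold, respectively of a virtually fibered non-Seifert graph manifold). So there is nothing in the paper to compare your proposal against, and your proposal does not, and does not claim to, close the conjecture.

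That said, your analysis of the problem is accurate and matches the standard picture in the literature. You correctly identify the right-hand side as $\frac{1}{6\pi}\op{vol}(N) = |\rho^{(2)}(\wti{N})|$, the $L^2$-torsion, and you correctly isolate the three obstacles: (i) localizing to geometric pieces via Mayer--Vietoris along JSJ tori, (ii) showing Seifert pieces contribute sublinearly, and (iii) establishing the convergence for a single hyperbolic piece. You are right that (iii) is the crux: even for a single closed hyperbolic manifold the convergence is open (Bergeron--Venkatesh settle the strongly acyclic case along congruence towers, not trivial coefficients), and the possible persistence of $b_1$ in the tower creates cancellation problems between Betti numbers and torsion that no one currently knows how to control. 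You are also right that (i) is not merely bookkeeping: a single cofinal regular tower of $N$ does not restrict to independently chosen towers of the pieces, so the reduction is not free. In short, your proposal is a reasonable roadmap, it is honest about where it fails, and its failure points are exactly the reasons the statement remains a conjecture rather than a theorem in this paper.
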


For graph manifolds, i.e.\ manifolds with $\op{vol}(N)=0$, this conjecture, in fact a stronger version of it, was independently proved by Le~\cite{Le14} and Meumertz\-heim~\cite{Me16}.

A proof of Conjecture~\ref{conj:vol-tor} would in particular imply that if $\op{vol}(N)$ is greater than zero, i.e.\ if $N$ is not a graph manifold, then $N$ admits  finite covers such that the torsion in the first homology becomes arbitrarily large.

Even this statement is highly non-trivial. For closed hyperbolic 3-manifolds this was first proved by Sun \cite{Su15}, building on the techniques pioneered by Kahn--Markovic \cite{KaM12}. More precisely, Sun proved the following theorem.

\begin{theorem}\label{thm:sun}\textbf{\emph{(Sun)}}
If $N$ is a closed hyperbolic 3-manifold, then for every $k\in\N$ there exists a finite cover $\wti{N}$ of $N$ $($not necessarily regular$)$ such that $|\tor H_1(\wti{N};\Z)|>k$.
\end{theorem}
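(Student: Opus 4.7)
The plan is to combine the surface subgroup theorem of Kahn--Markovic with Agol's theorem that $\pi_1(N)$ is LERF in order to produce an embedded incompressible surface $S_0$ in a finite cover $N_0 \to N$, and then to extract torsion from the first homology of finite cyclic covers of $N_0$ dual to $S_0$.

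First, I would apply the Kahn--Markovic surface subgroup theorem to obtain a closed $\pi_1$-injective nearly geodesic immersed surface $f\colon S \looparrowright N$. Agol's theorem (building on Wise) says that $\pi_1(N)$ is LERF, so the subgroup $f_*\pi_1(S)$ is separable, and after replacing $N$ by a finite cover $N_0 \to N$ the surface $S$ lifts to an embedded incompressible surface $S_0 \hookrightarrow N_0$. Passing to a further finite cover if needed, one can ensure that $[S_0]$ is a primitive nonzero class in $H_1(N_0;\Z)$, in particular that $S_0$ is non-separating and determines an epimorphism $\phi\colon\pi_1(N_0)\twoheadrightarrow\Z$.

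Next, I would study the degree-$n$ cyclic covers $N_0^{(n)} \to N_0$ associated to $\phi$. Standard arguments (cutting $N_0$ along $S_0$ and stacking $n$ copies end-to-end, then running Mayer--Vietoris) identify $\op{Tor} H_1(N_0^{(n)};\Z)$, up to controlled factors, with a finite abelian group of order
\[ \prod_{\zeta^n = 1} |\Delta(\zeta)|, \]
where $\Delta(t)\in\zt$ is the Alexander polynomial of the pair $(N_0,\phi)$. If $\Delta$ has Mahler measure strictly greater than one, then by Lehmer's formula this quantity tends to $\infty$ with $n$, and choosing $n$ sufficiently large delivers $|\op{Tor} H_1(N_0^{(n)};\Z)|>k$, as required.

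The hard part will be arranging $\Delta$ to be non-monic, equivalently that $[S_0]$ does \emph{not} lie on a fibered face of the Thurston norm ball of $N_0$. The Kahn--Markovic construction is sufficiently flexible to produce surfaces spanning many homology classes after passage to covers, so I would try to place $[S_0]$ off the (finitely many) fibered faces of $N_0$ by choosing the initial immersed surface and the LERF cover appropriately. If this obstruction persists, an alternative is to replace $\Delta$ by a twisted Alexander polynomial associated with a non-abelian finite quotient of $\pi_1(N_0)$; by the work of Friedl--Vidussi, if $\phi$ is not fibered then some such twisted polynomial is non-monic, and essentially the same cyclic-cover/Mahler-measure argument then produces the desired torsion. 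The control of the fibered versus non-fibered dichotomy under passage to covers is the real technical core of the argument.
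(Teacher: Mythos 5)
This theorem is not proved in the paper at all; the authors simply cite Sun's paper \cite{Su15}. So the relevant comparison is with Sun's actual argument, which is very different from what you propose. Sun works directly with the Kahn--Markovic good pants machinery: he constructs explicit $\pi_1$-injective $2$-complexes inside a closed hyperbolic $3$-manifold from assemblies of nearly geodesic pants and tubes, chosen so that the resulting complex carries prescribed torsion in its first homology, and he then lifts this to an actual finite cover using LERF/specialness. There is no Alexander polynomial or cyclic-cover Mahler-measure computation in his proof.

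Your proposed route via cyclic covers has a genuine gap, and it is exactly the step you flag as ``the hard part.'' After the Kahn--Markovic plus LERF step you have a finite cover $N_0$ with $b_1>0$ and a class $\phi$. For the cyclic covers $N_0^{(n)}$ to accumulate torsion via Raimbault's theorem you need a (possibly twisted) Alexander polynomial of $(N_0,\phi)$ to have Mahler measure strictly greater than $1$, but nothing in the construction gives you any control over this. Two concrete problems:
\begin{enumerate}[font=\normalfont]
\item If $\phi$ happens to be fibered (and by Agol's virtual fibering theorem, some class in some cover always is), the Alexander polynomial is monic and its Mahler measure is $1$ precisely when the monodromy acts on $H_1$ of the fiber by a matrix whose eigenvalues are all roots of unity. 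This can and does happen; repairing it is exactly the content of Hadari's theorem, which only applies to fibers \emph{with boundary}. For a closed hyperbolic $3$-manifold the fiber is closed, and no analogue of Hadari's theorem is currently available. Indeed, this is the very reason the present paper has to cite Sun for the closed hyperbolic case rather than run its Hadari-based argument.
\item If $\phi$ is not fibered, the Friedl--Vidussi criterion gives a finite quotient $\alpha\colon\pi_1(N_0)\to G$ whose twisted Alexander polynomial $\Delta^\alpha_\phi$ is either zero or non-monic. But ``non-monic implies Mahler measure $>1$'' only controls the torsion of the $\alpha$-twisted chain complexes, not of $H_1(N_0^{(n)};\Z)$ directly; passing from twisted torsion to ordinary integral torsion in a tower of finite covers (here one must combine the cyclic covers with the $G$-cover and apply a Shapiro-type argument) requires a careful additional argument that your sketch omits, and when $\Delta^\alpha_\phi=0$ the method breaks down entirely.
\end{enumerate}
Also note the suggestion to ``place $[S_0]$ off the fibered faces by choosing the surface appropriately'' is not a priori achievable: the homology class of the embedded lift of a Kahn--Markovic surface is not something one can prescribe freely, and avoiding fibered cones does not by itself guarantee Mahler measure $>1$ of the untwisted Alexander polynomial. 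In short, your outline correctly identifies the hard step, but the workarounds you suggest do not close it, and a proof along these lines would need a substitute for Hadari's theorem in the closed-fiber setting, which is not known.
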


One can ask for which other 3-manifolds does the conclusion of Theorem~\ref{thm:sun} hold. Evidently the conclusion does not hold for some Seifert fibered spaces like $S^3$ or products $S^1\times \Sigma$. 

In this short note we observe that a recent theorem of Hadari~\cite{Ha15}, stated below as Theorem~\ref{thm:hadari}, can be used to extend the conclusion of Theorem~\ref{thm:sun} to  all hyperbolic 3-manifolds with boundary and more generally, to all  prime 3-manifolds  that are not  graph manifolds.

\begin{theorem}\label{mainthm}
If $N$ is a prime 3-manifold that is not a graph manifold, then for every $k\in\N$ there exists a finite cover $\wti{N}$ of $N$ such that $|\tor H_1(\wti{N};\Z)|>k$.
\end{theorem}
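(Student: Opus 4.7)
The plan is to split into two cases depending on whether $N$ is closed hyperbolic, invoking Sun's Theorem~\ref{thm:sun} directly in that case and reducing the rest to surface dynamics via Hadari's theorem.

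If $N$ is closed hyperbolic then Theorem~\ref{thm:sun} already yields the conclusion. Otherwise, because $N$ is prime and not a graph manifold, $N$ is either hyperbolic with nonempty toroidal boundary or has a JSJ decomposition containing at least one hyperbolic piece. In either case the virtual fibering results of Agol and Przytycki--Wise (building on Wise's virtual specialness) supply a finite cover $N'$ of $N$ that fibers over $S^1$, and I would replace $N$ by $N'$. Thus I may assume $N = M_\phi$ is the mapping torus of a homeomorphism $\phi \colon \Sigma \to \Sigma$ of a compact orientable surface with $\chi(\Sigma) < 0$; since $N$ is not a graph manifold, $\phi$ cannot be of finite order, for otherwise $M_\phi$ would be Seifert fibered.

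Next I would apply Theorem~\ref{thm:hadari} to the non-finite-order class $\phi$ to produce a finite cover $p \colon \wti{\Sigma} \to \Sigma$ and a lift $\wti{\phi}$ of some power $\phi^s$ such that the action of $\wti{\phi}_*$ on $H_1(\wti{\Sigma};\C)$ has an eigenvalue $\lambda$ with $|\lambda| \neq 1$; after replacing $\lambda$ by $\lambda^{-1}$ if needed I may take $|\lambda| > 1$. For each $n$ the mapping torus $M_{\wti{\phi}^n}$ is a finite cover of $N$, and since $H_1(\wti{\Sigma};\Z)$ is free abelian the Wang sequence of the fibration $M_{\wti{\phi}^n} \to S^1$ identifies
\[
\tor H_1(M_{\wti{\phi}^n};\Z) \;\cong\; \tor \coker\bigl(\wti{\phi}^n_* - I \colon H_1(\wti{\Sigma};\Z) \to H_1(\wti{\Sigma};\Z)\bigr),
\]
whose order is computed by the Smith normal form of $\wti{\phi}^n_* - I$ and should be bounded below by a positive power of $|\lambda^n - 1|$, which tends to infinity with $n$.

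The main technical obstacle I foresee is making this last assertion rigorous at the level of integral homology: the endomorphism $\wti{\phi}^n_* - I$ may fail to be invertible over $\Q$ (eigenvalue-$1$ contributions produce a growing free part of the cokernel), and one must ensure that the factor $|\lambda^n - 1|$ actually appears in the product of the nonzero elementary divisors rather than being absorbed into free rank. I plan to handle this by restricting attention to the $\wti{\phi}_*$-invariant rational subspace $V \subset H_1(\wti{\Sigma};\Q)$ spanned by the Galois orbit of $\lambda$, on which $\wti{\phi}^n_* - I$ is invertible for every $n$ with determinant whose absolute value is bounded below by a positive power of $|\lambda^n - 1|$, and then passing back to $H_1(\wti{\Sigma};\Z)$ via a standard integrality argument, choosing $n$ along a subsequence avoiding the finitely many arithmetic progressions on which a root-of-unity eigenvalue of $\wti{\phi}_*$ becomes trivial so that the relevant elementary divisors of $\wti{\phi}^n_* - I$ inherit the exponential growth.
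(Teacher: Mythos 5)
Your high-level strategy (Sun for closed hyperbolic, virtual fibering otherwise, then Hadari plus a Wang-sequence computation) matches the paper's, but there are two genuine gaps, the first of which is substantial.

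\textbf{Gap A: Hadari's theorem requires the surface to have boundary.} Theorem~\ref{thm:hadari} is stated (and proved by Hadari) only for surfaces with non-empty boundary. If $N$ is a closed mixed manifold --- i.e.\ closed, not hyperbolic, not a graph manifold --- then every finite fibered cover is closed, so the fiber $\wti{\Sigma}$ is a closed surface and Theorem~\ref{thm:hadari} does not apply to it. Your proposal applies Hadari directly to the monodromy of the fibered cover without addressing this. This is precisely where the paper does its main new technical work: it proves Proposition~\ref{prop:hadari2}, an extension of Hadari's theorem to closed surfaces under the hypothesis that $\psi$ has infinite order on some proper $\pi_1$-injective subsurface. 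For a closed mixed manifold one obtains such a subsurface by the Nielsen--Thurston decomposition (Theorem~\ref{thm:nielsen-thurston}): a reducing system of curves gives a piece on which the monodromy is pseudo-Anosov, and since the manifold is not closed hyperbolic this piece is proper. Proposition~\ref{prop:hadari2} then transfers the infinite-order homology action from a finite cover of that subsurface (where Hadari applies) to a finite cover of the ambient closed fiber, using the virtual retract property of $\pi_1$-injective subsurfaces (Lemma~\ref{lem:virtually-h1-injective}) and a careful lifting argument (Lemmas~\ref{lem:power-of-psi-lifts} and~\ref{lem:lifts-of-homeomorphisms}). Without this, your argument simply does not cover the closed mixed case.

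\textbf{Gap B: infinite order does not give an eigenvalue off the unit circle.} Hadari's conclusion is that $\wti{\phi}_*$ has infinite order on $H_1(\wti{\Sigma};\Z)$; it does not assert any eigenvalue of modulus $\neq 1$. An integer matrix can have infinite order with every eigenvalue equal to $1$ (a nontrivial unipotent, e.g.\ $\bigl(\begin{smallmatrix}1&1\\0&1\end{smallmatrix}\bigr)$). Your proposal jumps from ``infinite order'' to ``some $|\lambda|>1$,'' and then the entire Galois-orbit / Smith-normal-form argument presupposes such a $\lambda$ exists. By Kronecker's theorem the true dichotomy is: either some eigenvalue has modulus $>1$, or \emph{all} eigenvalues are roots of unity. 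The paper treats both branches separately: when some $|\lambda|>1$ it invokes Raimbault's asymptotic (Theorem~\ref{thm:sw02}) to get exponential torsion growth in $M(\wti{F};\wti{\varphi}^n)$; when all eigenvalues are roots of unity it uses the elementary Proposition~\ref{prop:eigenvalue-root-of-unity}, which shows that a unipotent-after-powering matrix $A$ of infinite order produces torsion of any prescribed order in $\Z^k/(A^n-\id)\Z^k$ for suitable $n$ (via divisibility of binomial coefficients). Your sketch has no analogue of the second branch, so even after fixing Gap~A the argument would be incomplete.
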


In many cases the finite covers we produce will turn out to be irregular covers. \\

Using Hadari's theorem we can also prove the following result on virtual torsion homology of graph manifolds.

\begin{theorem}\label{mainthm2}
Let $N$ be a virtually fibered graph manifold, which is not a Seifert fibered space. Then for every $k\in\N$ there exists a finite cover $\wti{N}$ of $N$ such that $|\tor H_1(\wti{N};\Z)|>k$.
\end{theorem}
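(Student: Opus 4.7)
The plan is to follow the strategy used for Theorem~\ref{mainthm}: pass to a fibered finite cover of $N$, apply Hadari's theorem to a cover of the fiber so that a lift of a power of the monodromy acts with infinite order on $H_1$, and then take cyclic covers along the $S^1$-direction to produce arbitrarily large torsion in first homology.

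In more detail, the virtual fibering hypothesis yields a finite cover $N'=M_\phi$ of $N$ for some compact surface $\Sigma$ and homeomorphism $\phi\colon\Sigma\to\Sigma$. The fiber must satisfy $\chi(\Sigma)<0$: a torus fiber would force $N'$ to admit the Sol- or Nil-geometry, and neither option is compatible with $N$ being a graph manifold which is not SFS. Since being SFS is a commensurability invariant among compact orientable 3-manifolds, $N'$ is not SFS, and hence $\phi$ cannot have finite order in $\op{Mod}(\Sigma)$. Theorem~\ref{thm:hadari} applied to $\phi$ then produces a finite cover $\widetilde{\Sigma}\to\Sigma$, a positive integer $m$, and a lift $\widetilde{\phi}$ of $\phi^m$ whose induced map on $H_1(\widetilde{\Sigma};\Q)$ has infinite order. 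The mapping torus $M_{\widetilde{\phi}}$ covers $M_{\phi^m}$, which is the $m$-fold cyclic cover of $N'$, so $M_{\widetilde{\phi}}$ is a finite cover of $N$.

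To conclude, for each $n\geq 1$ the $n$-fold cyclic cover of $M_{\widetilde{\phi}}$ is $M_{\widetilde{\phi}^n}$, and the Wang sequence gives
\[ H_1(M_{\widetilde{\phi}^n};\Z)\cong \Z\oplus\op{coker}\bigl(\widetilde{\phi}^n_*-\id\colon H_1(\widetilde{\Sigma};\Z)\to H_1(\widetilde{\Sigma};\Z)\bigr),\]
so it remains to show that the torsion part of this cokernel is unbounded in $n$. Since $\widetilde{\phi}_*$ has infinite order over $\Q$, Kronecker's theorem gives a dichotomy: either some eigenvalue of $\widetilde{\phi}_*$ lies off the unit circle, in which case $|\det(\widetilde{\phi}^n_*-\id)|$ grows exponentially and equals the torsion order whenever it is nonzero; or every eigenvalue is a root of unity and some power $\widetilde{\phi}^N_*$ is unipotent and nontrivial. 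I expect the unipotent subcase to be the main obstacle, since $\det(\widetilde{\phi}^{Nn}_*-\id)$ then vanishes and the torsion must be extracted from the elementary divisors of $\widetilde{\phi}^{Nn}_*-\id$ via a Smith normal form computation, which nonetheless produces invariant factors growing at least linearly in $n$ and hence unbounded torsion in the cokernel.
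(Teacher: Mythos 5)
Your argument works when the fibered finite cover $N'$ has a fiber $\Sigma$ with non-empty boundary (for instance when $N$ has boundary), but it has a genuine gap when $\Sigma$ is closed, which does happen for closed graph manifolds that virtually fiber. Theorem~\ref{thm:hadari} is stated only for surfaces with non-empty boundary -- Hadari's construction relies essentially on $\pi_1(\Sigma)$ being free -- so you cannot apply it directly to a closed fiber. This is exactly what Proposition~\ref{prop:hadari2} is designed to fix: one needs to locate a $\pi_1$-injective \emph{proper} subsurface $S\subset F$, preserved up to isotopy by the monodromy, on which the restricted monodromy still has infinite order; Hadari is applied to $S$, and the resulting action is transplanted into a cover of $F$ by means of a virtual retraction $\ol{\pi}\to\pi_1(S)$ (Lemma~\ref{lem:virtually-h1-injective}). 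For a graph manifold one naturally takes $S$ to be the fiber cut along one essential curve coming from the JSJ decomposition, but there is a further wrinkle that you would need to handle: to ensure that $\psi|_S$ still has infinite order, the paper first passes to a finite cover in which there are at least two JSJ tori (Lemma~\ref{lem:at-least-two-jsj-tori}), so that after removing the annulus around one curve the mapping torus $M(S,\psi|_S)$ is still a graph manifold with a non-trivial JSJ torus and hence not Seifert fibered. None of this appears in your proposal, and without it the closed-fiber case is unproved.

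Two smaller issues. First, your exclusion of the torus fiber is not justified under the paper's convention that a graph manifold means $\op{vol}(N)=0$: Sol torus bundles have vanishing volume, are not Seifert fibered, and fiber with torus fiber, so this case does occur. It is, however, easy, because $\mod(T^2)=\op{Aut}(H_1(T^2;\Z))$ and the monodromy already acts with infinite order on $H_1$, so no cover is needed. Second, in the concluding step you correctly identify the root-of-unity/unipotent case as the delicate one, but you only assert that you ``expect'' the invariant factors of $\widetilde{\phi}^n_*-\id$ to grow. That claim needs a proof; it is precisely Proposition~\ref{prop:eigenvalue-root-of-unity}, which passes to a power making the matrix $\id+B$ with $B$ nilpotent and then chooses $n$ so that every entry of $(\id+B)^n-\id$ is divisible by a prescribed $m$, giving an element of order $m$ in the cokernel since $A^n\ne\id$.
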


Note that by \cite[Theorem~0.1]{WY97} every graph manifold with non-trivial boundary is virtually fibered. So the statement of Theorem~\ref{mainthm2} applies in particular to any graph manifold with boundary that is not a Seifert fibered space. We conclude this introduction with the following question.

\begin{question}
Does every graph manifold with non-trival JSJ decomposition admit arbitrarily large torsion homology in finite covers?
\end{question}

\noindent \emph{Final remark.} Just as we were about to  finish this paper we saw a preprint by Yi Liu~\cite{Li17}. He showed in particular, see \cite[Corollary~1.4]{Li17}, that every irreducible compact 3-manifold with empty or toroidal boundary that is not a graph manifold admits a \emph{regular}
finite cover $\wti{N}$ such that $|\tor H_1(\wti{N};\Z)|\ne 0$.

\subsection*{Conventions.} All manifolds, in particular all surfaces, are assumed to be connected, compact and orientable, unless we say explicitly otherwise.

\subsection*{Acknowledgment.}
Both authors gratefully acknowledge the support provided by the SFB 1085 `Higher
Invariants' at the University of Regensburg, funded by the DFG.

%=========================================
\section{Proofs}

%=========================================
\subsection{Preparations}

Given a  surface $S$ we denote by $\mod(S)$ the mapping class group of $S$, i.e.\ the group of all self-diffeomorphisms of $S$ up to isotopy. The following theorem, that extends earlier work of Koberda and Mangahas~\cite{Ko15,KoM15},  was recently proved by Hadari~\cite{Ha15}.

\begin{theorem}\label{thm:hadari}\textbf{\emph{(Hadari)}}
Let $S$ be a   surface with non-empty boundary. Let $\psi\in \mod(S)$ be an element of infinite order. Then there
exists a finite solvable cover $\wti{S}\to S$ and a  lift $\wti{\psi}$ of $\psi$ to $\wti{S}$ such that the action of
$\wti{\psi}$ on $H_1(\wti{S};\Z)$  has infinite order.
\end{theorem}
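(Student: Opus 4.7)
The plan is as follows. Since $\partial S \neq \emptyset$, the group $\pi_1(S) = F$ is a free group of finite rank $n$. Choosing a basepoint, I promote $\psi$ to an automorphism $\Phi \in \Aut(F)$ whose outer class $[\Phi] \in \Out(F)$ has infinite order. For a finite-index normal subgroup $K \lhd F$, the cover $\wti{S} \to S$ corresponding to $K$ admits a lift of $\psi$ whenever $\Phi(K) = K$, which is automatic when $K$ is characteristic; in that case the resulting action on $H_1(\wti{S};\Z) = K^{\mathrm{ab}}$ agrees with $(\Phi|_K)_*$ up to an inner twist that does not affect its order. Thus the task reduces to producing a characteristic finite-index subgroup $K \lhd F$ with $F/K$ solvable such that $(\Phi|_K)_*$ has infinite order on $K^{\mathrm{ab}}$.

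I would construct such a $K$ from a mod-$p$ derived tower. Fix a prime $p$, set $F_0 := F$, and inductively $F_{i+1} := \ker\bigl(F_i \twoheadrightarrow H_1(F_i;\F_p)\bigr)$. Each $F_i$ is characteristic in $F$, has finite index, and $F/F_i$ is a finite $p$-group, hence solvable. Restricting $\Phi$ to $F_i$ induces a matrix $\Phi_i \in \GL(H_1(F_i;\Z))$; it suffices to show that for some $i$ the characteristic polynomial of $\Phi_i$ has a non-cyclotomic factor, equivalently that $\Phi_i$ has infinite order.

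The main obstacle is ruling out the alternative in which every $\Phi_i$ has only roots of unity as eigenvalues. The plan for deriving a contradiction rests on two structural facts: $F$ is residually a finite $p$-group, so $\bigcap_i F_i = \{1\}$ and the Magnus expansion embeds $F$ faithfully into the completed tensor algebra $\Z\langle\!\langle x_1,\ldots,x_n\rangle\!\rangle$; and the Andreadakis--Johnson filtration of $\Aut(F)$ by the kernels of its action on the nilpotent quotients $F/\gamma_k F$ is separated, so the infinite order of $[\Phi]$ in $\Out(F)$ is detected by its action on some $F/\gamma_k F$. The core move is to convert the hypothesis ``every $\Phi_i$ has finite order on $H_1(F_i;\Z)$'' into a uniform bound on the orders of the induced actions on the $F/\gamma_k F$, contradicting that detection. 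This conversion between two different filtrations of $F$ --- derived-type versus lower-central --- is the hardest step; it is likely where one must leverage a Nielsen--Thurston or train-track normal form for $\Phi$ (Bestvina--Handel), so that pseudo-Anosov pieces produce non-cyclotomic eigenvalues through their stretch factors on appropriately chosen characteristic covers, while Dehn-twist pieces contribute via explicit transvection calculations on $K^{\mathrm{ab}}$. This conversion is the technical heart of the argument and is where I expect the real work to lie.
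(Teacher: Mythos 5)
This theorem is not proved in the paper at all: it is quoted verbatim from Hadari's preprint~\cite{Ha15} and used as a black box. So there is no in-paper argument to compare your proposal against; what you have written is an attempt to reprove Hadari's result from scratch.

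Your framework is sound as far as it goes. Since $S$ has nonempty boundary, $\pi_1(S)$ is a finitely generated free group $F$; a lift of $\psi$ exists over any cover corresponding to a $\Phi$-invariant (in particular characteristic) finite-index subgroup $K$; and the mod-$p$ derived tower $F_{i+1}=\ker(F_i\to H_1(F_i;\F_p))$ does produce characteristic subgroups with finite $p$-group, hence solvable, quotients. The problem is that the entire content of Hadari's theorem is the step you explicitly leave open: showing that the hypothesis ``$\Phi_i$ has finite order on $H_1(F_i;\Z)$ for every $i$'' is incompatible with $[\Phi]$ having infinite order in $\Out(F)$. Everything before that is routine; everything after it is the theorem. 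Your sketch of how to close the gap (compare the mod-$p$ derived filtration with the lower central / Andreadakis--Johnson filtration, invoke the Magnus embedding, run a Nielsen--Thurston or Bestvina--Handel case analysis with pseudo-Anosov stretch factors versus Dehn-twist transvections) is a plausible list of ingredients but is not an argument: you do not say how finiteness of the actions on the $K^{\mathrm{ab}}$ would force finiteness of the action on the nilpotent quotients $F/\gamma_k F$, and the two filtrations are genuinely transverse, so no formal comparison does this for you. In particular, a reducible $\psi$ whose pseudo-Anosov pieces sit on proper subsurfaces can have trivial action on $H_1(F;\Z)$ itself, and it is far from automatic that a \emph{single} passage to a characteristic derived-tower cover makes the stretch factor visible on abelianized homology --- arranging that is exactly where Hadari's real work lies. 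As it stands the proposal is an honest research plan with the crux acknowledged but unresolved, not a proof.
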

%We make the following observation.
%
%\begin{lemma}\label{lem:lifts}
%Let $S$ be a   surface  and let $\psi\in \mod(S)$ be an element  such that the action of
%$\psi$ on $H_1({S};\Z)$  has infinite order. If  $p\colon \what{S}\to S$ is a finite  covering  such that $\psi$ lifts to a self-diffeomorphism $\what{\psi}$ of $\what{S}$, then the action of
%$\what{\psi}$ on $H_1(\what{S};\Z)$ also  has infinite order.
%\end{lemma}
%
%\begin{proof}
%Since $p\colon \what{S}\to S$ is a finite  covering it follows from the standard transfer argument that $p_*\colon H_1(\what{S};\Q)\to H_1(S;\Q)$ is surjective. Note that $\Q$ is flat over $\Z$ and hence $H_1(\what{S};\Q)=H_1(\what{S};\Z)\otimes\Q $.
%We can consider the following commutative diagram
%\[ \xymatrix@C1.85cm@R0.8cm{ H_1(\what{S};\Q)\ar@{->>}[d]_(0.45){p_*}\ar[r]^{\what{\psi}_*\otimes\id_\Q}& H_1(\what{S};\Q)\ar@{->>}[d]^(0.45){p_*}\\
%H_1(S;\Q)\ar[r]^{\psi_*\otimes\id_\Q}& H_1(S;\Q)\\
%}\]
%The same diagram also exists if we replace $\psi$ by a finite power.
%From this observation it follows that  if $\what{\psi}_*$ has finite order, then $\psi_*$ has finite order too. 
%\end{proof}

\begin{definition}
Let $S$ be a subsurface of a surface  $F$.
\bn
\item  We say $S$  is \emph{$\pi_1$-injective} if $\pi_1(S)\to \pi_1(F)$ is a monomorphism. (Recall that by our convention surfaces are connected.) Similarly we define \emph{$H_1$-injective}.
\item We say $S$ is a \emph{proper subsurface} if $S\ne F$.
\en
\end{definition}

Later on we will need the following  extension of Theorem~\ref{thm:hadari}.

\begin{proposition}\label{prop:hadari2}
Let $F$ be a  surface and let $\psi$ be a self-diffeomorphism of $F$.
If there exists a  $\pi_1$-injective proper subsurface $S$ of $F$ such that 
$\psi$ restricts to a self-diffeomorphism of $S$ with infinite order, then there
exists a finite  cover $\wti{F}\to F$ and a  lift $\wti{\theta}$ of some power $\theta=\psi^k$ such that the action of
$\wti{\theta}$ on $H_1(\wti{F};\Z)$  has infinite order.
\end{proposition}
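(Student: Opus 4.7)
The plan is to promote Hadari's theorem from the subsurface~$S$ to the ambient surface~$F$ by combining subgroup separability with the virtual retract property for surface groups. First observe that $S$ must have nonempty boundary: a connected proper codimension-zero subsurface of the connected surface $F$ necessarily has frontier curves in the interior of $F$. Thus Theorem~\ref{thm:hadari} applies to the infinite-order element $\psi|_S \in \mod(S)$, yielding a finite solvable cover $\widetilde{S} \to S$ --- corresponding to a $(\psi|_S)_*$-invariant finite-index subgroup $H := \pi_1(\widetilde{S}) \le \pi_1(S)$ --- together with a lift $\widetilde{\psi}_S$ of $\psi|_S$ whose action on $H_1(\widetilde{S}; \Z)$ has infinite order.

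Next I extend $\widetilde{S} \to S$ to a cover of $F$. Surface groups are LERF by Scott's theorem, so the finitely generated subgroup $H$ is separable in $\pi_1(F)$; excluding the finitely many nontrivial cosets of $H$ in $\pi_1(S)$ one at a time produces a finite-index $K_0 \le \pi_1(F)$ with $K_0 \cap \pi_1(S) = H$. The cover $\widetilde{F}_0 \to F$ corresponding to $K_0$ contains $\widetilde{S}$ as a component of the preimage of~$S$. The key step is to apply the virtual retract property for surface groups to the $\pi_1$-injective subsurface $\widetilde{S} \subset \widetilde{F}_0$: there exists a finite-index $K \le \pi_1(\widetilde{F}_0)$ containing $H$ together with a retraction $r\colon K \to H$. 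This holds because subsurface subgroups are quasi-convex in surface groups, and quasi-convex subgroups of virtually compact special groups --- a class that contains all surface groups after Agol--Wise --- are virtual retracts. Let $\widetilde{F} \to \widetilde{F}_0$ be the cover corresponding to $K$; abelianising $r$ yields a split injection $\iota_*\colon H_1(\widetilde{S}; \Z) \hookrightarrow H_1(\widetilde{F}; \Z)$ and a decomposition $H_1(\widetilde{F}; \Z) \cong H_1(\widetilde{S}; \Z) \oplus \ker(\bar r)$.

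To produce the lift I pass to a power of $\psi$. Since $\pi_1(F)$ has only finitely many subgroups of any fixed index, some power $\psi^k$ sends $K$ to a conjugate of itself and hence lifts to $\widetilde{F}$. A further pigeonhole argument on the finite deck group of $\widetilde{S} \to S$ lets me replace $k$ by a multiple so that the restriction of the lift to $\widetilde{S}$ is a \emph{positive power} of the chosen Hadari lift $\widetilde{\psi}_S$. Writing $\theta := \psi^k$ and letting $\widetilde{\theta}$ denote this lift, the fact that $\widetilde{\theta}$ preserves $\widetilde{S}$ makes $\iota_*$ a $\widetilde{\theta}$-equivariant split injection; its image is a $\widetilde{\theta}$-invariant direct summand of $H_1(\widetilde{F}; \Z)$ on which $\widetilde{\theta}$ acts as a positive power of $(\widetilde{\psi}_S)_*$. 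Since this action has infinite order by Hadari's theorem, $\widetilde{\theta}$ has infinite order on $H_1(\widetilde{F}; \Z)$.

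The main obstacle is the virtual retract step. Without it, the kernel of $H_1(\widetilde{S}; \Z) \to H_1(\widetilde{F}; \Z)$ could be $\widetilde{\theta}$-invariant and absorb the infinite-order part of the action --- a unipotent example in which the fixed subspace coincides with the kernel shows that infinite order on a subgroup is not enough to guarantee infinite order on the quotient. Virtual retraction eliminates this possibility by placing the Hadari action as a genuine direct summand inside $H_1(\widetilde{F}; \Z)$.
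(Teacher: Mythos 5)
Your proof is correct and follows essentially the same strategy as the paper: apply Hadari's theorem to $\psi|_S$, embed the resulting Hadari cover $\wti{S}$ in a finite cover $\wti{F}$ of $F$ in which $\pi_1(\wti{S})$ is a retract, lift a suitable power of $\psi$, and use a pigeonhole/base-point argument to force the lift to restrict on $\wti{S}$ to a power of the Hadari lift. The difference is in ordering and bookkeeping. The paper first passes to an intermediate cover $\ol{F}\to F$ in which $\pi_1(S)$ is a retract of $\ol{\pi}=\pi_1(\ol{F})$ (Lemma~\ref{lem:virtually-h1-injective}), lifts a power of $\psi$ to $\ol{F}$, applies Hadari to the \emph{restriction} of that lift to $S$, and then defines $\wti{F}$ as the cover of $\ol{F}$ corresponding to $\ker(\varphi\circ\gamma)$, where $\varphi$ is the Hadari quotient map and $\gamma$ the retraction; this pre-composition makes the retract structure descend automatically to $\wti{S}\subset\wti{F}$. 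Your route applies Hadari first, then needs a separate LERF step to realise $\wti{S}$ as a subsurface of some cover $\wti{F}_0$, and then re-invokes the virtual retract property for the pair $\wti{S}\subset\wti{F}_0$; that is sound, but the paper's pre-composition avoids both the explicit LERF step and the second retract invocation. One cosmetic remark: you justify the virtual retract property via quasiconvexity in virtually compact special groups (Agol--Wise), which is far heavier machinery than needed --- the paper's Lemma~\ref{lem:virtually-h1-injective} rests only on Scott's 1978 separability and almost-geometricity results for surface groups.
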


In the proof of Proposition~\ref{prop:hadari2} we will need the following three lemmas, all three of them are surely known to the experts.

\begin{lemma}\label{lem:virtually-h1-injective}
Let $F$ be a surface and let $S$ be a subsurface of $F$ that is $\pi_1$-injective. Let $x_0\in S$ be a base point.  There exists a finite-index subgroup $\ol{\pi}$ of $\pi_1(F,x_0)$ that contains $\pi_1(S,x_0)$ and such that $\pi_1(S,x_0)\subset \ol{\pi}$ is a retract.
\end{lemma}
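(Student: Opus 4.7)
The plan is to split into two cases depending on whether $F$ has non-empty boundary.

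If $F$ has non-empty boundary, then $\pi_1(F, x_0)$ is a finitely generated free group. Since $S$ is a compact surface, $\pi_1(S, x_0)$ is a finitely generated subgroup, so by M.~Hall's theorem it is a free factor of some finite-index subgroup $\overline{\pi} \leq \pi_1(F, x_0)$, and free factors are retracts. This finishes the boundary case.

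If $F$ is closed, then since $S$ is a proper $\pi_1$-injective subsurface of a connected closed surface we must have $\partial S \neq \emptyset$, so $\pi_1(S, x_0)$ is again free of finite rank. The torus case $F = T^2$ is handled by hand, since every subgroup of $\Z^2$ is a virtual retract. For $F$ of genus at least two I would invoke that closed orientable surface groups are virtually compact special (Haglund--Wise), together with Haglund's theorem that quasiconvex subgroups of virtually compact special groups are virtual retracts. With respect to any hyperbolic structure on $F$, the subgroup $\pi_1(S, x_0)$ is quasiconvex: the cover $\hat F$ of $F$ corresponding to $\pi_1(S, x_0)$ has $S$ as a compact core with half-open cylinder ends, so $\pi_1(S, x_0)$ acts convex cocompactly on $\H^2$.

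The harder part is the closed case, which appeals to non-trivial machinery from the theory of special cube complexes. A more elementary alternative would combine Scott's LERF theorem (to find a finite cover in which $S$ embeds) with an explicit construction of the retraction from the graph-of-groups decomposition of $\pi_1(\overline{F})$ along $\partial S$; however, producing the retraction on each complementary piece typically forces one to pass to further covers to arrange that the boundary-generated subgroup becomes a free factor, so the virtually special shortcut is the cleanest route.
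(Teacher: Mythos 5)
Your argument is correct, but it takes a genuinely different route than the paper, which disposes of the lemma in one line by citing Scott's papers on subgroups of surface groups being almost geometric \cite{Sc78,Sc85} together with \cite[Theorem~18]{FW16}. Your boundary case via Marshall Hall's theorem is a clean and standard shortcut that the citation also covers but does not single out. For the closed case you invoke Haglund--Wise (closed surface groups are virtually compact special) plus Haglund's canonical retraction for quasiconvex subgroups; this is correct, and your justification of quasiconvexity via the compact core of the cover $\hat F$ is fine (indeed, any finitely generated subgroup of a cocompact Fuchsian group is automatically convex cocompact since there are no parabolics, so one could skip the geometric description). It is worth noting, though, that this is less of an alternative to Scott's approach than it might seem: Scott's proof proceeds by passing to a finite cover tiled by right-angled hyperbolic polygons and using the resulting reflection group, which is precisely the prototype special cube complex; the modern machinery you cite packages that construction. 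The net comparison is that the paper's proof is a bare citation, while yours is self-contained at the cost of quoting two substantial black boxes in the closed case; a reader wanting to unwind either proof ends up in roughly the same place.

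One very small point: the lemma as stated does not assume $S$ is a proper subsurface. You should note that if $S = F$ the statement is trivial (take $\overline{\pi} = \pi_1(F)$), which then licenses the assumption $\partial S \neq \emptyset$ in the closed case.
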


\begin{proof}
This lemma was shown implicitly by Scott~\cite{Sc78,Sc85}, see also  \cite[Theorem~18]{FW16}.
\end{proof}

\begin{lemma}\label{lem:power-of-psi-lifts}
Let $F$ be a surface and let $\psi\colon F\to F$ be a diffeomorphism that fixes a base point $x_0$. Let $p\colon \ol{F}\to F$ be a finite cover and let $\ol{x}_0$ be a pre-image of $x_0$. Then there exists a $k\in \N$ such that $\psi^k$ lifts to a diffeomorphism $\ol{\phi}\colon\ol{F}\to \ol{F}$ with
$\ol{\phi}(\ol{x}_0)=\ol{x}_0$.
\end{lemma}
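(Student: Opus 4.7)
The plan is to translate the existence of a lift into a condition on subgroups of $\pi_1(F,x_0)$ and then exploit the finiteness of subgroups of a given finite index. Set $\pi := \pi_1(F,x_0)$ and let $H := p_*(\pi_1(\ol{F},\ol{x}_0)) \le \pi$, which has finite index $d:=[\ol{F}:F]$. By the standard lifting criterion from covering space theory, a continuous map $\varphi\colon F\to F$ with $\varphi(x_0)=x_0$ lifts to a continuous map $\ol{\varphi}\colon \ol{F}\to \ol{F}$ with $\ol{\varphi}(\ol{x}_0)=\ol{x}_0$ if and only if $\varphi_*(H)\subseteq H$. Since $\psi^k$ is a diffeomorphism, $(\psi^k)_*$ is an automorphism of $\pi$, so $(\psi^k)_*(H)$ has the same index $d$ as $H$; hence the inclusion $(\psi^k)_*(H)\subseteq H$ is automatically an equality. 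Moreover any lift of a diffeomorphism is again a diffeomorphism.

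It therefore suffices to produce $k\in\N$ with $(\psi^k)_*(H)=H$. The key input is that $\pi$, being the fundamental group of a compact surface, is finitely generated, and a finitely generated group has only \emph{finitely many} subgroups of any given finite index (each such subgroup is the kernel of a homomorphism into the finite symmetric group $S_d$, and there are finitely many such homomorphisms). Let $\mathcal{F}$ denote the (finite) set of index-$d$ subgroups of $\pi$. The automorphism $\psi_*$ of $\pi$ acts on $\mathcal{F}$ by permutation, so the orbit of $H$ under $\langle\psi_*\rangle$ is finite. Taking $k$ to be the length of this orbit (or any positive multiple thereof) yields $(\psi^k)_*(H)=H$, and the lifting criterion produces the desired diffeomorphism $\ol{\phi}\colon\ol{F}\to\ol{F}$ with $\ol{\phi}(\ol{x}_0)=\ol{x}_0$.

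There is no real obstacle here; the only point requiring mild care is to verify that the ``subset'' condition $(\psi^k)_*(H)\subseteq H$ can be upgraded to equality (which is where we use that $\psi^k$, and hence $(\psi^k)_*$, is invertible), and to invoke finite generation of $\pi_1(F)$ to guarantee that index-$d$ subgroups form a finite set on which $\psi_*$ acts.
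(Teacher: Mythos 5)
Your proof is correct and follows essentially the same approach as the paper: reduce to finding $k$ with $\psi_*^k$ preserving $p_*(\pi_1(\ol F,\ol x_0))$, and obtain $k$ from the fact that $\psi_*$ permutes the finite set of finite-index subgroups of the finitely generated group $\pi_1(F,x_0)$. Your write-up is marginally more explicit (justifying finiteness via homomorphisms to $S_d$, and noting that the inclusion $(\psi^k)_*(H)\subseteq H$ is automatically an equality), but the argument is the same.
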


\begin{proof}
We pick a base point $\ol{x}_0$ of $\ol{F}$ with $p(\ol{x}_0)=x_0$. 
We write $\pi=\pi_1(F,x_0)$ and we write $m=[F:\ol{F}]=[\pi_1(\ol{F},\ol{x}_0):\pi]$. 
We denote by $G_m(\pi)$ the set of subgroups of $\pi$ of index $m$.
The  isomorphism $\psi_*\colon \pi\to \pi$ acts on $G_m(\pi)$ in an obvious way. Note that the set $G_m(\pi)$ is finite since $\pi_1(F,x_0)$ is finitely generated.
It follows that there exists a $k\in \N$ such that the action of $\psi_*^k$ on $G_m(\pi)$ is trivial. But this implies in particular that $\psi_*^k$ preserves $p_*(\pi_1(\ol{F},\ol{x}_0))$, i.e.\ $\psi_*^k$ restricts to an isomorphism $p_*(\pi_1(\ol{F},\ol{x}_0))\to p_*(\pi_1(\ol{F},\ol{x}_0))$. Standard covering theory now implies that $\psi^k$ lifts to a diffeomorphism $\ol{F}\to \ol{F}$.
\end{proof}

\begin{lemma}\label{lem:lifts-of-homeomorphisms}
Let $p\colon \wti{X}\to X$ be an $m$-fold covering of a surface $X$. Let $x_0\in X$ be a base point and let $\wti{x}_0$ a point in $\wti{X}$ with $p(\wti{x}_0)=x_0$. Let $\psi\colon X\to X$ be a homeomorphism with $\psi(x_0)=x_0$. Then the following statements hold:
\bn[font=\normalfont]
\item If $\wti{\psi}$ is a lift of $\psi$ to a homeomorphism of $\wti{X}$, then $\wti{\psi}^{m!}(\wti{x}_0)=\wti{x}_0$.
\item Suppose $\alpha$ and $\beta$ are two lifts  of $\psi$ to a homeomorphism of $\wti{X}$. If $\alpha(\wti{x}_0)=\beta(\wti{x}_0)$, then $\alpha=\beta$.
\en
\end{lemma}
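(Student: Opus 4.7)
My plan is to treat the two statements separately; both are essentially covering-space bookkeeping.

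For part (1), the key observation is that $\wti\psi$ permutes the fiber $p^{-1}(x_0)$. First I would verify this: since $p\circ\wti\psi=\psi\circ p$ and $\psi(x_0)=x_0$, for any $\wti y\in p^{-1}(x_0)$ we get $p(\wti\psi(\wti y))=\psi(p(\wti y))=\psi(x_0)=x_0$, so $\wti\psi(\wti y)\in p^{-1}(x_0)$. The fiber has cardinality $m$, so the restriction of $\wti\psi$ to $p^{-1}(x_0)$ is an element of the symmetric group on $m$ letters, hence has order dividing $m!$. Therefore $\wti\psi^{m!}$ acts as the identity on $p^{-1}(x_0)$; in particular it fixes $\wti x_0$.

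For part (2), I would invoke the standard uniqueness of lifts from covering space theory. Both $\alpha$ and $\beta$ are lifts of the map $\psi\circ p\colon\wti X\to X$ along the covering $p\colon\wti X\to X$; since $\wti X$ is connected (surfaces are connected by the standing convention) and they agree at the single point $\wti x_0$, the uniqueness of lifts forces $\alpha=\beta$ on all of $\wti X$.

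Neither half presents a serious obstacle; these are textbook covering-space facts and no step is more delicate than another. The only thing to be a little careful about is making sure the hypotheses of the uniqueness-of-lifts theorem are genuinely satisfied, namely that $\wti X$ is connected and that $\alpha$ and $\beta$ are genuinely lifts in the sense that $p\circ\alpha=p\circ\beta=\psi\circ p$, which is built into the definition of "lift of $\psi$."
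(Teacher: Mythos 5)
Your proof is correct and follows the same route as the paper's: part (1) by observing that $\wti\psi$ permutes the $m$-element fiber $p^{-1}(x_0)$ so $\wti\psi^{m!}$ fixes it pointwise, and part (2) by the uniqueness of lifts of $\psi\circ p$ on the connected cover $\wti X$. You simply spell out details that the paper leaves implicit.
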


\begin{proof}
Since $\wti{\psi}$ is a lift it acts via permutation on the set $p^{-1}(x_0)$ and hence the first statement follows.
The second statement follows from the uniqueness of lifts.
\end{proof}

\begin{proof}[Proof of Proposition~\ref{prop:hadari2}]
Let $F$ be a  surface, let $\psi$ be a self-diffeomorphism of $F$
and let $S$ be a $\pi_1$-injective proper subsurface of $F$ such that 
$\psi$ restricts to a self-diffeomorphism of $S$ with infinite order. After an isotopy we can without loss of generality assume that $\psi$ fixes a base point $x_0\in S$. 

By Lemma~\ref{lem:virtually-h1-injective} there exists a  finite-index subgroup $\ol{\pi}$ of $\pi_1(F,x_0)$ that contains $\pi_1(S,x_0)$ and such that there exists a retraction $\gamma\colon  \ol{\pi}\to \pi_1(S,x_0)$.
Thus we obtain  the following commutative diagram
\[ \xymatrix@C1.5cm@R0.5cm{ 
&\ol{\pi}\ar@<-2.5pt>[dl]_\gamma \ar@{^(->}[d]\\
\pi_1(S,x_0)\ar@<-2.5pt>@{_(->}[r]\ar@{_(->}@<-2.5pt>[ur]&\pi_1(F,x_0).}\]
We denote by $q\colon \ol{F}\to F$ the finite covering corresponding to $\ol{\pi}$.  
By elementary covering theory we can and will view $S$ as a subsurface of $\ol{F}$. In particular we will equip $S$ and $\ol{F}$ with the base point $x_0\in S\subset \ol{F}$. 

Furthermore, by Lemma~\ref{lem:power-of-psi-lifts} there exists a $l\in \N$ such that $\psi^l$ lifts to a self-diffeomorphism $\ol{\phi}$ of $\ol{F}$ with $\ol{\phi}(x_0)=x_0$. Note that  $\ol{\phi}|_S=(\psi|_S)^l$.
In particular $\ol{\phi}|_S$ has infinite order. Therefore we obtain from Theorem~\ref{thm:hadari} a finite regular cover $r\colon \wti{S}\to S$ of degree $h$ such that $\sigma=\ol{\phi}|_S$ lifts to a self-diffeomorphism $\wti{\sigma}$ of $\wti{S}$ such that the induced action of $\wti{\sigma}$ on $H_1(\wti{S};\Z)$ has infinite order. 

We denote by $\varphi\colon \pi_1(S,x_0)\to G$ the epimorphism given by the projection map $\pi_1(S,x_0)\to \pi_1(S,x_0)/\pi_1(\wti{S},x_0)$. We denote by $r\colon \wti{F}\to \ol{F}$ the covering corresponding to $\ker(\varphi\circ \gamma)$. We can and will view $\wti{S}$ as a subsurface of $\wti{F}$. Summarizing we obtain the following diagrams
\[ \xymatrix@C1.5cm@R0.55cm{ 
&\pi_1(\wti{F},\wti{x}_0) \ar@<-2.5pt>[dl]_\gamma \ar@{^(->}[d]^{r_*}\\
\pi_1(\wti{S},\wti{x}_0)\ar@{^(->}[d]_{r_*}\ar@{_(->}@<-2.5pt>[ur]&\pi_1(\ol{F},x_0)\ar@<-2.5pt>[dl]_\gamma \\
\pi_1(S,x_0)\ar@{_(->}@<-2.5pt>[ur]\ar[d]^\varphi&\\
G}
\quad\quad \quad
 \xymatrix@C1.5cm@R0.55cm{ 
&\wti{F} \circlearrowleft_{\wti{\phi}} \ar@<-10pt>@{->}[d]^{r}\\
\hspace{0.7cm}{}_{\wti{\sigma}^n}\circlearrowright \wti{S}\ar@<11pt>[d]_{r}\ar@{^(->}[ur]\hspace{0.95cm}&\hspace{0.15cm}\ol{F}\circlearrowleft_{\ol{\phi}^n} \\
{}_{\sigma^n=\ol{\phi}^n|_S}\hspace{-0.15cm}\circlearrowright S\ar@{^(->}[ur]\hspace{0.95cm}&\\
}
\]

We pick a base point $\wti{x}_0$ of $\wti{S}$ such that $r(\wti{x}_0)=x_0$. Note that by Lemma~\ref{lem:power-of-psi-lifts} we obtain an $m\in\N$ such that
the self-diffeomorphism $\ol{\varphi}:=\ol{\phi}^m$ of $\ol{F}$ lifts to a self-diffeomorphism $\wti{\varphi}$ of $\wti{F}$. 
 We write $n=h!$. It follows from Lemma~\ref{lem:lifts-of-homeomorphisms} (1) that $\wti{\varphi}^n$ and $\wti{\sigma}^n$ both fix the base point $\wti{x}_0$.
Note that  $\wti{\varphi}^n|_{\wti{S}}$ and $\wti{\sigma}^n$ are both lifts of $(\ol{\varphi}|_S)^n=\sigma^{nm}$.
Since both fix $\wti{x}_0$ we obtain from Lemma~\ref{lem:lifts-of-homeomorphisms} (2) that  $\wti{\varphi}^n|_{\wti{S}}=\wti{\sigma}^n$.

Since $\pi_1(\wti{S},\wti{x}_0)$ is a retract of $\pi_1(\wti{F},\wti{x}_0)$ we see that 
 $H_1(\wti{S};\Z)$ is also a retract of $H_1(\wti{F};\Z)$, in particular
the map   $H_1(\wti{S};\Z)\to H_1(\wti{F};\Z)$ is a monomorphism. 
Summarizing we see that the restriction of $\wti{\phi}^n$ to 
the subgroup  $H_1(\wti{S};\Z)\subset H_1(\wti{F};\Z)$ equals $\wti{\sigma}_*^n$, hence it has has infinite order.
But  $\wti{\theta}=\wti{\varphi}^n$ is a lift of $\psi^{k}$ with $k=lmn$.
\end{proof}

Given a surface $F$ and  $\psi\in \mod(F)$ we denote by $M(F,\psi)$ the corresponding mapping torus. Furthermore, by a slight abuse of notation we denote the induced isomorphism $H_1(F;\Z)\to H_1(F;\Z)$ by $\psi$ as well.
Also, given a free abelian group $H$ and an automorphism $\psi$ of $H$ we denote the corresponding semidirect product by $\Z\ltimes_\psi H$. 
For future reference we record the following well-known lemma.

\begin{lemma}\label{lem:h1fibered}
Given a surface $F$ and  $\psi\in \mod(F)$ we have the following isomorphisms
\[ H_1(M(F,\psi);\Z)\,\cong \,H_1(\Z\ltimes_\psi H_1(F;\Z);\Z)\,\cong\, \Z\oplus H_1(F;\Z)/(\psi-\id)H_1(F;\Z). \]
\end{lemma}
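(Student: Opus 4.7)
The plan is to establish each of the two isomorphisms separately, since both are essentially standard computations.

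For the right-hand isomorphism, I would compute the abelianization of $G = \Z \ltimes_\psi H_1(F;\Z)$ directly from its presentation. Writing $t$ for a generator of the $\Z$-factor and abbreviating $H = H_1(F;\Z)$, the group $G$ is generated by $t$ together with $H$, subject to the relations that $H$ is abelian and $tht^{-1} = \psi(h)$ for every $h \in H$. Abelianizing turns the latter relations into $\psi(h) - h = 0$, and no new relations are needed, so
\[ G^{ab} \;\cong\; (\Z \oplus H)\big/\langle (\psi - \id)h : h \in H\rangle \;\cong\; \Z \oplus H/(\psi - \id)H. \]

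For the middle isomorphism, I would apply the Wang exact sequence to the fiber bundle $F \to M(F,\psi) \to S^1$ with monodromy $\psi$:
\[ H_1(F;\Z) \xrightarrow{\psi_* - \id} H_1(F;\Z) \to H_1(M(F,\psi);\Z) \to H_0(F;\Z) \xrightarrow{\psi_* - \id} H_0(F;\Z). \]
Since $F$ is connected, $H_0(F;\Z) = \Z$ and $\psi_* - \id$ vanishes there, so we obtain a short exact sequence
\[ 0 \to H_1(F;\Z)/(\psi - \id)H_1(F;\Z) \to H_1(M(F,\psi);\Z) \to \Z \to 0, \]
which splits because $\Z$ is free. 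Combining this with the previous step yields the claim.

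No real difficulty is expected; the only step requiring minor attention is identifying the connecting map in the Wang sequence with $\psi_* - \id$, which is classical. As an alternative, one can derive the first isomorphism from the five-term exact sequence in group homology applied to $1 \to \pi_1(F) \to \pi_1(M(F,\psi)) \to \Z \to 1$, using that $\pi_1(M(F,\psi)) \cong \pi_1(F) \rtimes_\psi \Z$ and that $H_1$ of a space coincides with the abelianization of its fundamental group; this route makes both isomorphisms formally parallel.
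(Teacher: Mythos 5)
The paper states this lemma without proof, citing it only as ``well-known,'' so there is nothing to compare against directly. Your argument is correct and is the standard route: the abelianization computation for the semidirect product is right (abelianizing forces $tht^{-1}=h$, which combined with $tht^{-1}=\psi(h)$ yields the relations $(\psi-\id)h=0$, and no further relations arise since $H_1(F;\Z)$ is already abelian), and the Wang sequence for a fiber bundle over $S^1$ with connected fiber gives the split short exact sequence $0\to H_1(F;\Z)/(\psi-\id)H_1(F;\Z)\to H_1(M(F,\psi);\Z)\to\Z\to 0$ exactly as you say, with the splitting coming from $\Z$ being free. Your remark that one could instead run the five-term exact sequence in group homology for $1\to\pi_1(F)\to\pi_1(M(F,\psi))\to\Z\to 1$ is also apt and makes the two halves of the lemma structurally parallel; either route is fine and neither requires any input beyond the Wang/LHS machinery.
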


Later on we will need the following proposition.

\begin{proposition}\label{prop:eigenvalue-root-of-unity}
Let $A\in \gl(k,\Z)$ be a matrix of infinite order such that every eigenvalue of $A$ is a root of unity. Then given any $m\in \N$ there exists an $n$ such that 
\[ \Z^k/(A^n-\id)\Z^k\]
has an element of order $m$. 
\end{proposition}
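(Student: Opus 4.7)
The plan is to exploit the finiteness of the group $\gl(k,\Z/m\Z)$ together with the infinite order of $A$ over $\Z$. Since $\gl(k,\Z/m\Z)$ is finite, the image of $A$ in it has some finite order $n$; thus $A^n \equiv \id \pmod m$, so $A^n - \id = mM$ for some integer matrix $M$. Because $A$ has infinite order in $\gl(k,\Z)$, we have $A^n \ne \id$ and hence $M \ne 0$; in particular $M$ has positive rank $r \ge 1$ as a $\Q$-linear operator on $\Q^k$.

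Next I would analyze the torsion subgroup $T$ of $G = \Z^k/(A^n-\id)\Z^k = \Z^k/mM\Z^k$. Let $V \subseteq \Q^k$ be the $\Q$-span of $M\Z^k$ (of dimension $r$) and set $\Lambda = V \cap \Z^k$, a free abelian group of rank $r$. A direct argument shows that the class of $x \in \Z^k$ is torsion in $G$ if and only if $x$ lies in the saturation $\Lambda$; thus $T = \Lambda/mM\Z^k$. Since $M\Z^k \subseteq \Lambda$, we have $mM\Z^k \subseteq m\Lambda$, which yields a surjection
\[
T = \Lambda/mM\Z^k \twoheadrightarrow \Lambda/m\Lambda \cong (\Z/m\Z)^r.
\]
Because $r \ge 1$ the target contains an element of order exactly $m$. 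Lifting it to $T$ (which is finite, being the torsion subgroup of a finitely generated abelian group) gives an element whose order is a positive multiple $cm$ of $m$; multiplying by $c$ then produces an element of $T \subseteq G$ of order exactly $m$, as required.

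I do not foresee a significant technical obstacle. The only place the hypothesis on $A$ enters is that the infinite order of $A$ forces $M \ne 0$, hence $r \ge 1$; the roots-of-unity hypothesis does not appear to be strictly necessary for this argument, though it is consistent with the intended application. The one step worth stating carefully is the identification of the torsion subgroup with $\Lambda/mM\Z^k$, but this is routine lattice bookkeeping.
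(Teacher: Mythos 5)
Your proof is correct, and it takes a genuinely different route from the paper's. The paper first replaces $A$ by a power so that all eigenvalues equal $1$, proves that $B=A-\id$ is then nilpotent with $B^k=0$, and expands $A^n-\id=(\id+B)^n-\id=\sum_{i=1}^{k-1}\binom{n}{i}B^i$ to produce, via the explicit choice $n=(k-1)!\,m$, a matrix all of whose entries are divisible by $m$. You instead pick $n$ to be the order of $A$ in the finite group $\gl(k,\Z/m\Z)$ (this is well defined since $\det A=\pm 1$ is a unit mod $m$), which immediately gives $A^n-\id=mM$. This is more elementary and, as you correctly observe, strictly more general: your argument never uses the roots-of-unity hypothesis, so you have in effect shown that the conclusion holds for every $A\in\gl(k,\Z)$ of infinite order. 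The paper's binomial computation buys an explicit $n$ in terms of $k$ and $m$, which your argument does not supply. For the final step you also go into more detail than the paper: the paper simply asserts that $M\neq 0$ yields an element of order $m$ in $\Z^k/mM\Z^k$, whereas you carry out the lattice bookkeeping (identifying the torsion subgroup with $\Lambda/mM\Z^k$, surjecting onto $\Lambda/m\Lambda\cong(\Z/m\Z)^r$, and correcting the order of a lift). One could shorten that part slightly via Smith normal form: if $mM$ has elementary divisors $md_1\mid\cdots\mid md_r$ with $r\ge 1$, then the cyclic summand $\Z/md_1\Z$ already contains an element of order $m$. Either way your argument is complete.
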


\begin{proof}
Since all  eigenvalues of $A$ are roots of unity we can,  possibly after going to a finite power of $A$, assume that all eigenvalues are in fact equal to $1$.

\begin{claim}
The matrix  $B:=A-\id$ is nilpotent, more precisely, $B^k=0$.
\end{claim}

Since $1$ is the only eigenvalue of $A$  there exists a rational matrix $Q$ such that $QAQ^{-1}=\id+C$ where $C=(c_{ij})$ is a rational  matrix with $c_{ij}=0$ for $i\geq j$. In particular $C^k=0$. 
But then
\[ QB^kQ^{-1}\,\,=\,\, Q(A-\id)^kQ^{-1}\,\,=\,\, (QAQ^{-1}-\id)^k\,\,=\,\,C^k\,\,=0.\]
This concludes the proof of the claim.

\begin{claim}
Given  $m\in \N$ there exists an $n$ such that all entries of 
\[ A^n-\id \]
are divisible by $m$. 
\end{claim}

By the previous claim we have $B^n=0$ for all $n\geq k$. Now for any $n\geq k$ we have
\[  A^n-\id\,\,=\,\,(\id+B)^n-\id\,\,=\,\, \sum\limits_{i=1}^{k-1} \bp n\\ i\ep B^i.\]
We choose $n=(k-1)!m$. Then given any  $i\in \{1,\dots,k-1\}$ the natural number $m$ divides 
\[ \bp n\\ i \ep\,\,=\,\, \frac{n}{i}\bp n-1\\ i-1\ep.\]
This concludes the proof of the claim.

%\begin{claim} 
%Given any $n$ the matrix $(\id+B)^n-\id$ is non-zero.
%\end{claim}
%
%Let $m$ be the minimal natural number such that an $(i,i+m)$-entry of $B$ is non-zero. Then it is straightforward to see that the $(i,i+m)$-entry of
%\[ (\id+B)^n-\id\,\,=\,\, \sum\limits_{i=1}^{k-1} \bp n \\ i\ep B^i\]
%is again non-zero. This concludes the proof of the claim.

Now let $m\in \N$ and pick $n$ as in the previous claim. Since $A$ has infinite order the matrix $A^n-\id$ is non-zero. 
We deduce that $\Z^k/(A^n-\id)\Z^k$ contains an element of order $m$. 
\end{proof}

The following theorem is a slight generalization of results of Silver--Williams \cite{SW02} and Jean Raimbault ~\cite{Ra12}. 

\begin{theorem}\label{thm:sw02}
Let $F$ be a surface and $\psi\in \mod(F)$ such that the action of  $\psi$ on $H_1(S;\Z)$ has at least one eigenvalue $\lambda$ with $|\lambda|>1$, then for any $k\in\N$ there exists an $n\in \N$ such that $|\tor H_1(M(F;\psi^n);\Z)|>k$.
\end{theorem}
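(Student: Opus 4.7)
The plan is to apply Lemma~\ref{lem:h1fibered} to reduce the torsion estimate to the cokernel of $A^n-\id$ on a free abelian group, isolate the noncyclotomic part of $A$ where a Silver--Williams/Raimbault-style resultant calculation gives growth, and propagate the growth back via the snake lemma.

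By Lemma~\ref{lem:h1fibered}, $|\tor H_1(M(F,\psi^n);\Z)|=|\tor(H_1(F;\Z)/(\psi^n-\id)H_1(F;\Z))|$. Since $F$ is orientable, $H_1(F;\Z)\cong \Z^d$ is free abelian, so letting $A\in \GL_d(\Z)$ represent $\psi_*$, the task reduces to showing that $|\tor(\Z^d/(A^n-\id)\Z^d)|$ is unbounded as $n$ varies over $\N$. I would factor $\chi_A(t)=p(t)q(t)$ in $\Z[t]$ with $p$ a product of cyclotomic polynomials and $q$ having no root of unity among its roots; since $|\lambda|>1$, $\lambda$ is a root of $q$ and the Mahler measure satisfies $M(q)\geq |\lambda|>1$. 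Set $L_U:=\Z^d\cap \ker q(A)$ and $L_V:=\Z^d\cap \ker p(A)$; these are saturated $A$-invariant sublattices with $L_U\oplus L_V\subset \Z^d$ of some finite index $m$.

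Since $q$ has no root of unity as a root, $(A^n-\id)|_{L_U}$ is invertible over $\Q$ for every $n\geq 1$, so $L_U/(A^n-\id)L_U$ is finite of order
\[
|\det((A^n-\id)|_{L_U})|\;=\;\prod_{q(\mu)=0}|\mu^n-1|\;=\;|\mathrm{Res}(q(t),t^n-1)|.
\]
The key step, which is the heart of the Silver--Williams~\cite{SW02} and Raimbault~\cite{Ra12} results, is that this resultant is unbounded in $n$: factors $|\mu^n-1|$ with $|\mu|>1$ grow like $|\mu|^n$, factors with $|\mu|<1$ tend to $1$, and factors with $|\mu|=1$ (not roots of unity) oscillate in $(0,2]$ but, by Weyl equidistribution, can be kept uniformly bounded below on a positive-density subset of $n$; along that subset $|\mathrm{Res}(q,t^n-1)|\geq c\cdot M(q)^n\to\infty$. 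Alternatively, Baker's theorem gives the uniform bound $|\mu^n-1|\geq C/n^{C'}$, yielding the full asymptotic $\tfrac{1}{n}\log|\mathrm{Res}(q,t^n-1)|\to\log M(q)$.

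Finally, let $Q:=\Z^d/(L_U\oplus L_V)$, a finite abelian group of order $m$. The snake lemma applied to $0\to L_U\oplus L_V\to \Z^d\to Q\to 0$ with the endomorphism $A^n-\id$ produces an injection
\[
\coker((A^n-\id)|_{L_U\oplus L_V})\big/\im(\delta)\;\hookrightarrow\;\coker((A^n-\id)|_{\Z^d}),
\]
with $\delta\colon \ker((A^n-\id)|_Q)\to \coker((A^n-\id)|_{L_U\oplus L_V})$ the connecting map, so $|\im(\delta)|\leq |Q|=m$. Composing the inclusion of $\coker((A^n-\id)|_{L_U})$ as a direct summand of $\coker((A^n-\id)|_{L_U\oplus L_V})$ with this injection gives a finite subgroup of $\coker((A^n-\id)|_{\Z^d})$ of order at least $|\mathrm{Res}(q,t^n-1)|/m$, which therefore lies in the torsion. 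Combining with the resultant estimate shows $|\tor(\Z^d/(A^n-\id)\Z^d)|\to\infty$ along a suitable subsequence of $n$, completing the proof. The main obstacle is precisely the unboundedness of the resultant when $q$ has roots on the unit circle (e.g.\ when $\lambda$ is a Salem number); this is the mild extension of Silver--Williams/Raimbault referenced in the statement of the theorem.
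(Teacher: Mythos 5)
Your proof is correct, but it takes a more self-contained route than the paper, which simply cites Raimbault's Theorem~0.2 from~\cite{Ra12} to obtain the full asymptotic
\[
\lim_{n\to\infty}\tfrac{1}{n}\log\big|\tor H_1(M(F;\psi^n);\Z)\big|
=\log\Big(\ttmprod{i}{}\max\{1,|\lambda_i|\}\Big)>0,
\]
and then observes that a sequence growing at a positive exponential rate is unbounded. You instead reconstruct the underlying argument: reduce via Lemma~\ref{lem:h1fibered} to the cokernel of $A^n-\id$ on $\Z^d$, split off the non-cyclotomic block $L_U$ where $\coker\big((A^n-\id)|_{L_U}\big)$ has order $|\mathrm{Res}(q,t^n-1)|$, establish that this resultant is unbounded (via equidistribution or Baker-type estimates, which is precisely the hard analytic input in Silver--Williams and Raimbault), and propagate the torsion to the full lattice via the snake lemma applied to $0\to L_U\oplus L_V\to\Z^d\to Q\to 0$, losing only a bounded factor $m=|Q|$. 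Your snake-lemma step is sound: the composite $\coker\big((A^n-\id)|_{L_U}\big)\hookrightarrow\coker\big((A^n-\id)|_{L_U\oplus L_V}\big)\to\coker/\im(\delta)\hookrightarrow\coker\big((A^n-\id)|_{\Z^d}\big)$ has kernel of order at most $m$, so its image is a finite subgroup of the last group of order at least $|\mathrm{Res}(q,t^n-1)|/m$, and finite subgroups lie in the torsion. The trade-off is clear: the paper's proof is two lines but imports the entire Raimbault theorem as a black box, while yours is longer but exposes exactly which pieces are needed (only unboundedness of the resultant, not the precise Mahler-measure limit) and is therefore more elementary if one is content with a positive-density subsequence rather than the full limit.
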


\begin{proof}
We denote by $p(t)$ the characteristic polynomial of the action of  $\psi$ on the free abelian group $H_1(S;\Z)$. 
Raimbault ~\cite[Theorem 0.2]{Ra12} showed that
\begin{align*}
	\lim\limits_{n\to\infty}\smfrac{1}{n}\cdot \log |\tor H_1(M(F;\psi^n);\Z)| \,\,=\,\,\log \Big(\,\ttmprod{i=0}{2g}\max\left\{1,|\l_i|\right\}\Big),
\end{align*}
where $\l_1,\ldots,\l_{2g}$ are the eigenvalues of $\psi$. By assumption $p(t)$ has a zero $\l$ with $|\l|>1$ and hence the right hand side is non-zero. But $\lim\limits_{n\to\infty}1/n=0$, so that $|\tor H_1(M(F;\psi^n);\Z)|$ cannot be bounded, i.e.\ it has to become arbitrarily large.
\end{proof}

%=========================================
\subsection{Proof of Theorems~\ref{mainthm} and~\ref{mainthm2}}

Before we give the proof of Theorem~\ref{mainthm} and Theorem~\ref{mainthm2} we recall the key theorem regarding surface-automorphisms and mapping tori. We refer to \cite{AFW15} for more details and further references.

\begin{theorem}\label{thm:nielsen-thurston}
	Let $F$ be a compact orientable surface with negative Euler characteristic, let $\psi\in\mod(F)$, and let $N=M(F,\psi)$. Then precisely one of the following three statements holds:
	\begin{enumerate}[font=\normalfont]
		\item  $\psi$ has finite order, in this case  $N$ is Seifert fibered.
		\item $\psi$ is pseudo-Anosov, in this case  $N$ is hyperbolic.
		\item  $\psi$ is reducible, i.e.\ after an isotopy there exists  a
		finite non-empty collection $\Gamma$ of disjoint essential simple closed curves in $F$  and a tubular neighborhood $\nu \Gamma$ such that $\Gamma$ and $\nu \Gamma$ are  fixed by $\psi$ setwise and such that the $\psi$-mapping tori of $\Gamma$ are precisely the JSJ-tori of  $N=M(F,\psi)$. 
	\end{enumerate}
\end{theorem}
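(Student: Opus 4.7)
The plan is to combine the Nielsen--Thurston trichotomy for surface automorphisms with Thurston's geometrization of fibered 3-manifolds. The mutually exclusive and exhaustive nature of cases (1)--(3) is the classical Nielsen--Thurston theorem applied to $\psi\in\mod(F)$, which I would cite as the starting point; the remaining work is to identify the geometric type of $N=M(F,\psi)$ in each case.

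In case (1), if $\psi$ has finite order $k$, then by Nielsen realization we may isotope $\psi$ to an isometry of a constant-curvature metric on $F$. The $k$-fold cyclic cover of $N$ corresponding to $\psi^k=\id$ is $F\times S^1$, which is Seifert fibered by its $S^1$ factor, and since $\psi$ permutes this factor equivariantly the Seifert fibration descends to $N$. In case (2), if $\psi$ is pseudo-Anosov, then Thurston's hyperbolization theorem for surface bundles over $S^1$ endows $N$ with a complete finite-volume hyperbolic structure; this deep result is invoked as a black box via \cite{AFW15}.

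In case (3), the Nielsen--Thurston canonical reduction system yields a $\psi$-invariant essential multicurve $\Gamma$ on $F$ such that, after an isotopy, $\psi$ fixes $\Gamma$ and a tubular neighborhood $\nu\Gamma$ setwise, and on each component of $F\smallsetminus\nu\Gamma$ some power of $\psi$ restricts to either a finite-order or a pseudo-Anosov diffeomorphism. The mapping tori $T_\gamma\subset N$ of the components $\gamma\subset\Gamma$ are then embedded incompressible tori (incompressibility follows from essentiality and $\pi_1$-injectivity of $\Gamma$ in $F$, together with the obvious $\pi_1$-injectivity of each $T_\gamma$ in the ambient mapping torus). Cutting $N$ along these tori yields mapping tori of the subsurfaces, each of which is Seifert fibered or hyperbolic by cases (1) and (2). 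Minimality of the canonical reduction system, together with the fact that a hyperbolic mapping torus contains no further essential tori and that Seifert pieces adjacent along a torus in a minimal reduction are non-Seifert in the way required for JSJ, guarantees that this collection coincides with the JSJ decomposition of $N$.

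The principal obstacle is case (2): Thurston's hyperbolization of fibered 3-manifolds, which must be cited rather than reproved. A secondary technical point lies in case (3), namely verifying that the tori produced from the canonical reduction system are exactly the JSJ tori and not a refinement; this relies on the minimality built into the canonical reduction system and on standard uniqueness properties of the JSJ decomposition.
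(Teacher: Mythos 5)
The paper does not actually prove Theorem~\ref{thm:nielsen-thurston}: it is presented as background (``we recall the key theorem regarding surface-automorphisms and mapping tori. We refer to \cite{AFW15} for more details and further references''), so there is no in-paper argument to compare against. Your sketch is the standard route one would take if forced to assemble a proof, and it is correct in outline. The ingredients you invoke are the right ones: the Nielsen--Thurston classification for the trichotomy, Nielsen realization to make a finite-order~$\psi$ an isometry so that the $S^1$-fibration of the cyclic cover $F\times S^1$ descends to a Seifert fibration of $M(F,\psi)$, Thurston's hyperbolization of fibered 3-manifolds for the pseudo-Anosov case, and the canonical reduction system plus geometrization of the pieces for the reducible case. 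You also correctly identify where the weight of the argument lies.

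Two small points worth making explicit. First, mutual exclusivity of (1) and (3) is not quite the bare Nielsen--Thurston trichotomy, since a periodic class may also be reducible in the naive sense; what saves the statement is that (3) demands a \emph{non-empty} $\Gamma$ whose mapping tori are the JSJ-tori, and a finite-order monodromy produces a Seifert fibered $N$ with empty JSJ system, so (3) cannot hold. It is worth noting that the canonical reduction system is empty precisely when $\psi$ is periodic or pseudo-Anosov, which is what makes the three cases both exclusive and exhaustive. Second, the identification of the tori over the canonical reduction system with the JSJ-tori is genuinely the delicate part: one must rule out that two adjacent Seifert pieces glue to a larger Seifert piece (equivalently, that a curve of the CRS could be dropped), and this uses both minimality of the CRS and the fact that the fiberings of adjacent Seifert pieces do not match along the gluing torus. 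You flag this, which is appropriate, but a complete write-up would need to spell it out or cite it explicitly; given that the paper itself just points to \cite{AFW15}, citing the relevant statements there (or in Farb--Margalit for the CRS) is the cleanest way to close that gap.
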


\begin{proof}[Proof of Theorem~\ref{mainthm}]
Let $N$ be a prime 3-manifold that is not a graph manifold.
If $N$ is a closed hyperbolic 3-manifold then it follows from Sun's Theorem~\ref{thm:sun}  that  there exists a finite cover $\wti{N}$ of $N$ with $|\tor H_1(\wti{N};\Z)|> 0$. Henceforth we assume that $N$ is not a closed hyperbolic 3-manifold.

Since $N$ is not a graph manifold we know by the  virtual fibering theorem of Agol \cite{Ag08}, Przytycki--Wise \cite{PW12} and Wise \cite{Wi09,Wi12a,Wi12b}
(see also \cite[p.~94]{AFW15} for precise references and see \cite{FK14,CF17,GM17} for alternative proofs) that $N$ admits a finite cover that fibers over $S^1$. Therefore,  without loss of generality, we can assume that $N$ is already fibered. We denote by $F$ the fiber of a surface fiber structure of $N$ and we denote by $\psi$ the monodromy.

\begin{claim}
There
exists a finite  cover $\wti{F}\to F$ and a  lift $\wti{\phi}$ of some power $\psi^k$ such that the action of
$\wti{\phi}$ on $H_1(\wti{F};\Z)$  has infinite order.
\end{claim}

If $N$ has boundary, then the claim follows immediately from Theorem~\ref{thm:hadari}, in fact in this case one can arrange that $k=1$. 

Now suppose that $N$ is closed. 
By Theorem~\ref{thm:nielsen-thurston}  we can assume, possibly after an isotopy and possibly replacing  $\psi$ by a non-trivial power, that there exists a finite set of essential curves $c_1,\cdots,c_k$ such that the monodromy preserves the union of the $c_i$'s pointwise and such that it preserves the components of $F$ cut along the $c_i$'s setwise. Furthermore, since $N$ is not a graph manifold there exists a component $S$ of  $F$ cut along the $c_i$'s such that the action of $\psi$ on $S$ is pseudo-Anosov in particular has infinite order. 
Since we excluded the case that  $N$  is a closed hyperbolic manifold 
we see that  $\psi$ itself is not pseudo-Anosov. Therefore we know that $S$ is a proper subsurface of $F$. Since the $c_i$'s are essential we know that $\pi_1(S)\to \pi_1(F)$ is injective. 
It follows from Proposition~\ref{prop:hadari2} that there
exists a finite  cover $\wti{F}\to F$ and a  lift $\wti{\varphi}$ of some power $\psi^k$ such that the action of
$\wti{\varphi}$ on $H_1(\wti{F};\Z)$  has infinite order. This concludes the proof of the claim.

We first consider the case that all eigenvalues of $\wti{\varphi}_*\colon H_1(\wti{F};\Z)\to H_1(\wti{F};\Z)$ are roots of unity.
Then the desired statement follows from Proposition~\ref{prop:eigenvalue-root-of-unity} together with Lemma~\ref{lem:h1fibered}. If this is not the case, then it follows from Kronecker's Theorem \cite[Theorem~4.5.4]{Pr10} that there exists at least one eigenvalue $\lambda$ of $\wti{\varphi}_*$ with $|\lambda|>1$, but then the desired statement follows from Theorem~\ref{thm:sw02}  together with Lemma~\ref{lem:h1fibered}.
\end{proof}

The proof of Theorem~\ref{mainthm2} is very similar to the previous proof but we feel that the proofs are more readable if they are done separately.
In the proof we will make use of the following lemma.

\begin{lemma}\label{lem:at-least-two-jsj-tori}
Let $N$ be a closed irreducible 3-manifold with a non-trivial JSJ-decomposition that is not a torus bundle and that does not have a JSJ-piece that is a twisted $I$-bundle over the Klein bottle. Then there exists a finite cover $\wti{N}$ of $N$ with at least two JSJ-tori.
\end{lemma}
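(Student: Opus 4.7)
The plan is to reduce to the case that the JSJ-decomposition of $N$ consists of a single torus $T$ (otherwise $\wti{N}=N$ already works) and then to construct a finite cover $p\colon \wti{N}\to N$ such that (a) $p^{-1}(T)$ has at least two components, and (b) each component is a JSJ-torus of $\wti{N}$. The construction splits into cases depending on whether $T$ separates $N$.

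If $T$ is non-separating, then the Poincar\'e dual of $T$ gives a surjection $\alpha\colon \pi_1(N)\twoheadrightarrow \Z$ that vanishes on $\pi_1(T)$, because any loop in $T$ has zero intersection number with $T$. Composing with $\Z\twoheadrightarrow \Z/n$ and passing to the associated $n$-fold cyclic cover $\wti{N}_n$, a standard orbit count shows that $p^{-1}(T)$ consists of $n$ disjoint copies of $T$. If $T$ is separating, so $N=M_1\cup_T M_2$, I would instead use the separability of $\pi_1(T)=\Z^2$ in $\pi_1(N)$. This holds because finitely generated abelian subgroups of compact 3-manifold groups are separable by a theorem of Hamilton, and because $\pi_1(T)$ has infinite index in $\pi_1(N)$ since $\pi_1(N)$ itself cannot be $\Z^2$ for dimension reasons. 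Separability produces a finite-index subgroup $\overline\pi\subset \pi_1(N)$ properly containing $\pi_1(T)$; by a standard double-coset count, the corresponding cover $\wti{N}\to N$ has at least two preimages of $T$.

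It remains to check that each component $\wti{T}$ of $p^{-1}(T)$ is a JSJ-torus of $\wti{N}$. The torus $\wti{T}$ is essential and $\pi_1$-injective and sits between two lifts $\wti{M}_1,\wti{M}_2$ of the JSJ-pieces of $N$ adjacent to $T$. If some $M_i$ is hyperbolic, its cover $\wti{M}_i$ is hyperbolic, so $\wti{T}$ bounds a non-Seifert piece and is automatically a JSJ-torus. If both $M_i$ are Seifert fibered, then $N$ is not itself Seifert fibered (since its JSJ is non-trivial), so the fiber classes $f_1,f_2\in \pi_1(T)$ are non-parallel; passing to $\pi_1(\wti{T})$ only scales them by positive integers, so they remain non-parallel, meaning $\wti{T}$ separates two Seifert pieces with mismatched fibrations and is again a JSJ-torus of $\wti{N}$. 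The hypotheses that $N$ is not a torus bundle and has no $K\widetilde{\times}I$-piece exclude precisely the two degenerate situations in which a JSJ-torus of $N$ could be absorbed into a larger geometric piece of a finite cover. The main obstacle is the separability step in the separating case, which relies on Hamilton's non-trivial theorem about abelian subgroups of 3-manifold groups; the JSJ-verification, while conceptually routine, still requires care to confirm that hyperbolicity and fibration-mismatch are inherited by passage to covers.
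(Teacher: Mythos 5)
Your proposal reaches the correct conclusion, but it takes a genuinely different route from the paper, and it is less economical in two places.

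For the step of producing a cover in which $T$ has at least two preimages, the paper has a much cleaner device: since $\pi_1(N)$ is residually finite and non-abelian, it surjects onto a finite non-abelian group $G$; the image of the abelian group $\pi_1(T)\cong\Z^2$ in $G$ is then automatically a proper subgroup, so in the corresponding regular cover $p^{-1}(T)$ has $[G:\mathrm{im}\,\pi_1(T)]\geq 2$ components. This handles both separating and non-separating $T$ at once. Your case split — Poincar\'e duality and cyclic covers when $T$ is non-separating, and Hamilton's abelian subgroup separability theorem when $T$ is separating — also works, but it imports a considerably harder theorem (Hamilton's separability) to handle a case that the paper's uniform trick dispatches with nothing beyond residual finiteness. (One small phrasing issue: what you actually need from separability is that $\overline\pi$ is a \emph{proper} subgroup of $\pi_1(N)$, not that it properly contains $\pi_1(T)$; the latter is automatic since $\pi_1(T)$ has infinite index. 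The double-coset count you allude to does then give at least two preimages.)

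For the step of confirming that the preimage tori really are JSJ-tori of the cover, the paper simply invokes Proposition~1.9.2 and Theorem~1.9.3 of \cite{AFW15}, which state precisely that under the stated hypotheses (no torus bundle, no $K\widetilde{\times}I$ piece) the JSJ decomposition of a finite cover is the preimage of the JSJ decomposition downstairs. Your sketch re-derives the idea behind this fact — incompressibility is inherited, hyperbolic pieces lift to hyperbolic pieces, Seifert fiber slopes scale by the covering degree and hence remain mismatched — but it glosses over the details that the AFW result makes precise (uniqueness of Seifert fibrations on the relevant pieces, why the union of a hyperbolic lift with an adjacent piece cannot be Seifert fibered, and why the two excluded hypotheses are exactly the obstructions). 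This is not a wrong approach, but as written it is a sketch of the cited result rather than a self-contained argument; citing \cite{AFW15} directly, as the paper does, is both cleaner and rigorous.
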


\begin{proof}
Clearly we only have to consider the case that $N$ has precisely one JSJ-torus.
 Since $\pi_1(N)$ is residually finite we can find an epimorphism $\pi_1(N)\to G$ onto a finite non-abelian group $G$. We denote by $p\colon \wti{N}\to N$ the corresponding finite covering.
Since $N$ is not a torus bundle and since no JSJ-piece of $N$ is a twisted $I$-bundle over the Klein bottle it follows from Proposition~1.9.2 and Theorem~1.9.3 of \cite{AFW15} that the JSJ-decomposition of $\wti{N}$ is the preimage of the JSJ-decomposition of $N$. Since $N$ has a non-trivial JSJ-decomposition and since $G$ is non-abelian we see that
the $p^{-1}(T)$ has at least two components, so by the above discussion  $\wti{N}$ has at least two JSJ-tori.
\end{proof}

\begin{proof}[Proof of Theorem~\ref{mainthm2}]
Let $N$ be a virtually fibered graph manifold, which is not a Seifert fibered space.
We may already assume that $N$ is fibered over a surface $F$ with monodromy $\psi$. Since $N$ is not a Seifert fibered space it follows from Theorem~\ref{thm:nielsen-thurston} that the monodromy $\psi\in \mod(F)$ has infinite order.
As in the proof of Theorem~\ref{mainthm} it suffices to prove the following claim.

\begin{claim}
There
exists a finite  cover $\wti{F}\to F$ and a  lift $\wti{\phi}$ of some power $\psi^k$ such that the action of
$\wti{\phi}$ on $H_1(\wti{F};\Z)$  has infinite order.
\end{claim}

If the fiber $F$ has non-empty boundary, then the claim follows from Theorem~\ref{thm:hadari}.

Therefore we will now assume that the fiber $F$ is a closed surface. 
If $F$ is a torus, then $\pi_1(F)=H_1(F;\Z)$, hence if $\psi$ has infinite order, then by the standard isomorphism $\mod(F)=\op{SL}(2,\Z)=\op{Aut}(H_1(F;\Z))$ the action on $H_1(F;\Z)$ also has infinite order. So we can in fact take $\wti{F}=F$.

Finally suppose that $F$ is a surface of genus greater equal to two.
By Lemma~\ref{lem:at-least-two-jsj-tori} we can, possibly after going to a finite cover, assume that $N$ has at least two JSJ-tori. It follows from Theorem~\ref{thm:nielsen-thurston} that there  essential curves $c_1,\cdots,c_k$ with $k\geq 2$ such that the monodromy preserves a tubular neighborhood of  $c_1\cup \dots\cup c_k$ setwise.
After going to a suitable power of the monodromy we can arrange that for each $i$ the monodromy preserves a tubular neighborhood $\nu c_i$  setwise. 
We define $S:=F\setminus\nu c_1$. Note that the mapping torus $M(S,\psi|_S)$ is a graph manifold with at least one JSJ-torus given by the mapping torus on the remaining $c_j$s. Therefore it follows again from
Theorem~\ref{thm:nielsen-thurston} that $\psi|_S$ has infinite order. Therefore we can apply Proposition~\ref{prop:hadari2}.
This concludes the proof of the claim.
\end{proof}

\printindex

\end{document}